\newtheorem{thm}{Theorem}[section]
\newtheorem{lm}[thm]{Lemma}
\newtheorem{prb}[thm]{Problem}
\newcommand{\m}[1]{{\mathbf{\uppercase{#1}}}}
\newcommand{\vr}[1]{{\mathcal{\uppercase {#1}}}}
\newcommand{\ppf}{{\rm PPF}}
\newcommand{\eqr}{{\rm Eq}}
\newcommand{\fo}{{\rm FO3}}
\newcommand{\ra}{{\rm RA}}
\newcommand{\con}{{\rm Con}}
\def\id{{1\textrm{'}}}
\newcommand{\1}{1^\text{'}}
\newcommand{\ds}{\displaystyle}
\begin{document}

\title[Equivalence Relations and Relation Algebras]{Lattices of Equivalence Relations Closed Under the Operations of Relation Algebras}
\author{Jeremy F. Alm \and John W. Snow}
\date{\today}
\address{}
\email{}

\subjclass{}
\keywords{}
\bibliographystyle{plain}

\begin{abstract}
One of the longstanding problems in universal algebra is the question of which finite lattices are isomorphic to the congruence lattices of finite algebras.  This question can be phrased as which finite lattices can be represented as lattices of equivalence relations on finite sets closed under certain first order formulas.  We generalize this question to a different collection of first-order formulas, giving examples to demonstrate that our new question is distinct.  We then prove that every lattice $\m M_n$ can be represented in this new way. [This is an extended version of a paper submitted to \emph{Algebra Universalis}.]
\end{abstract}

\maketitle

\section{Introduction}

One of the longstanding problems in universal algebra is,

\begin{prb} \label{FCLRP} {\bf Finite Congruence Lattice Representation Problem:}
For which finite lattices $\m l$ is there a  finite algebra $\m a$ with $\m l \cong \con \m a$?
\end{prb}

A {\it primitive positive formula} is a first-order formula of the form $\exists \wedge({\rm atomic})$.  Suppose that $\vr R$ is a set of relations on a finite set $A$.  Let $\ppf(\vr r)$ be the set of all relations on $A$ definable using primitive positive formulas and relations from $\vr R$.  Let $\eqr(\vr R)$ be the set of all equivalence relations in $\vr R$. It follows from  \cite{bod,PoschelKaluznin1979} that $\vr R$ is the set of all universes of direct powers of an algebra $\m a$ with universe $A$ if and only if $\ppf(\vr R)=\vr R$.  (For references on similar characterizations,  the reader can see \cite{PosRel}.)  Therefore, Problem \ref{FCLRP} can be restated in the following way.

\begin{prb} \label{PPFP}
For which finite lattices $\m l$ is there a lattice $\vr l$ of equivalence relations on a finite set so that $\m l \cong \vr l$ and $\vr l=  \eqr(\ppf(\vr L))$?
\end{prb}

A natural extension of this problem is to consider first-order definitions employing types of formulas other than primitive positive formulas.  We suggest replacing primitive positive formulas here with any first-order formulas using at most three variables.  If $\vr R$ is a set of relations on a finite set $A$, let $\fo(\vr R)$ be the set of all relations on $A$ definable using first-order formulas with at most three variables and relations from $\vr R$.  Our extension of \ref{PPFP} can be stated as:

\begin{prb} \label{FO3P}
For which finite lattices $\m l$ is there a lattice $\vr l$ of equivalence relations on a finite set so that $\m l \cong \vr l$ and $\vr l = \eqr(\fo(\vr L))$?
\end{prb}

Our interest in first-order formulas with three variables stems from
a connection with relation algebras.  A {\it relation algebra} is an
algebra $\mathbf{A}=\langle A,+,\bar{\cdot},;,\cdot^{\cup},\1\rangle$ with  operations intended mimic the
operations of union, complement, composition,  converse, and identity
on binary relations.  A relation algebra $\m a$ is {\it
representable} if there is a set of binary relations $\vr R$ on a
set $B$ so that $\m a$ is isomorphic to the algebra $\langle \vr R,
\cup, \bar{\cdot}, \circ, \cdot^{\cup}, \1_B\rangle$.  A set $\vr R$ of binary
relations on a finite set $A$ is closed under the relation algebra
operations if and only if every binary relation on $A$ definable
with a first-order formula with at most three variables and
relations in $\vr R$ is already in $\vr R$ (see Theorem 3.32 of
\cite{games} or page 172 of \cite{thebook}).  For any set $\vr R$ of
binary relations on a set $A$, let $\ra (\vr R)$ be the relation
algebra generated by $\vr R$.  Then the above problem becomes:

\begin{prb} \label{RAP}
For which finite lattices $\m l$ is there a lattice $\vr l$ of equivalence relations on a finite set so that $\m l \cong \vr l$ and $\vr l =  \eqr(\ra(\vr L))$?
\end{prb}

For any relation algebra $\m a$, let $\eqr(\m a)$ be equivalence relation elements of $\m a$.  Then our problem becomes:

\begin{prb}
For which finite lattices $\m l$ is there a relation algebra $\m a$ which is representable on a finite set so that $\m l \cong \eqr(\m a)$?
\end{prb}
\section{Examples}

In this section we give two examples $\vr L$ and $\vr M$ of lattices of equivalence relations on finite sets.
In the first example, $\eqr(\ppf(\vr L)) = \vr L$ but $\eqr(\ra(\vr L)) \neq \vr L$.
In the second example,  $\eqr(\ra(\vr M)) = \vr M$ but $\eqr(\ppf(\vr M)) \neq \vr M$.
This demonstrates that these two notions are indeed distinct.

First, let $\m 2$ be the two-element lattice with universe $\{0,1\}$.  Let $\m
a = \m 2^2$, and let $\vr L=\con \m a$.  Then $\vr L$ contains four
equivalence relations -- the identity relation, the universal relation,
and the kernels of the projection homomorphisms.  The projection kernels
are the relations $\eta_0$ and $\eta_1$ defined so that $(x_0,x_1) \eta_0
(y_0,y_1)$ when $x_0=y_0$ and  $(x_0,x_1) \eta_1
(y_0,y_1)$ when $x_1=y_1$.  Since $\vr L$ is a
congruence lattice, $\eqr(\ppf(\vr L)) = \vr L$.
However, $\ra(\vr L)$ also contains the equivalence relation
$$\gamma = \id \cup \overline{(\eta_0 \cup \eta_1)}$$ which is not in $\vr L$,
so   $\eqr(\ra(\vr L)) \neq \vr L$.
Note that the relation $\gamma$ can also be defined with this  first-order formula which only uses {\it two} variables:
$$x \gamma y \leftrightarrow (x=y) \vee \neg[(x\eta_0 y) \vee(x \eta_1 y)].$$
Thus $\vr L$ is closed under
primitive positive definitions but not under the operations of relation
algebras or first-order definitions using at most three variables.

For our second example, suppose that $p\geq5$ is prime.
We consider $\con(\mathbb{Z}^2_p)$, which is a copy of $\m M_{p+1}$ consisting of
the identity $\id$, the universal relation $1$, and  $p+1$
atoms $\eta_0,\eta_1,\alpha_1,\ldots,\alpha_{p-1}$, given by
\begin{align*}
  \langle x_0,x_1\rangle\eta_0\langle y_0,y_1\rangle &\leftrightarrow x_0=y_0\\
  \langle x_0,x_1\rangle\eta_1\langle y_0,y_1\rangle &\leftrightarrow x_1=y_1\\
  \langle x_0,x_1\rangle\alpha_1 \langle y_0,y_1\rangle &\leftrightarrow 1 x_0-x_1=1 y_0-y_1\\
  \langle x_0,x_1\rangle\alpha_2\langle y_0,y_1\rangle &\leftrightarrow
  2x_0-x_1=2y_0-y_1 \\
  &\vdots\\
  \langle x_0,x_1\rangle\alpha_k\langle y_0,y_1\rangle &\leftrightarrow
  kx_0-x_1=ky_0-y_1\\
&\vdots\\
  \langle x_0,x_1\rangle\alpha_{p-1}\langle y_0,y_1\rangle &\leftrightarrow (p-1)x_0-x_1=(p-1)y_0-y_1
\end{align*}

\begin{lm}\label{lemma1}
 Suppose that $1\leq n <p-2$ and let $\vr M=\{1, \id, \eta_0, \eta_1, \alpha_1, \ldots, \alpha_n\}$.
Then   $\eqr(\ra(\vr M)) = \vr M$.
\end{lm}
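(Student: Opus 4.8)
The plan is to realize $\ra(\vr M)$ concretely inside the ``group'' relation algebra of $\mathbb{Z}_p^2$. For $S \subseteq \mathbb{Z}_p^2$ write $R_S = \{\,(x,y) : x-y \in S\,\}$ and set $\mathcal{A} = \{\,R_S : S \subseteq \mathbb{Z}_p^2\,\}$. One checks routinely that $R_S \cup R_T = R_{S\cup T}$, $\overline{R_S} = R_{\mathbb{Z}_p^2 \setminus S}$, $R_S \circ R_T = R_{S+T}$ (sumset), $R_S^{\cup} = R_{-S}$, and $\id = R_{\{0\}}$, so $\mathcal{A}$ is a subalgebra of the full relation algebra on $\mathbb{Z}_p^2$. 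Each generator of $\vr M$ has this form: $\id = R_{\{0\}}$, $1 = R_{\mathbb{Z}_p^2}$, and $\eta_0,\eta_1,\alpha_1,\ldots,\alpha_n$ are $R_{L_1},\ldots,R_{L_{n+2}}$, where $L_1,\ldots,L_{n+2}$ are the corresponding lines through the origin. Hence $\ra(\vr M) \subseteq \mathcal{A}$. Also, $R_S$ is an equivalence relation precisely when $0 \in S$, $S = -S$, and $S+S \subseteq S$, which (given $0 \in S$) forces $S$ to be a subgroup; the subgroups of $\mathbb{Z}_p^2$ are $\{0\}$, $\mathbb{Z}_p^2$, and the $p+1$ lines through the origin. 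So the lemma reduces to showing that, among the $R_H$ with $H$ a subgroup, exactly $R_{\{0\}}$, $R_{\mathbb{Z}_p^2}$, and $R_{L_1},\ldots,R_{L_{n+2}}$ lie in $\ra(\vr M)$.

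Next I would show $\ra(\vr M) = \{\,R_S : S \in \mathcal{D}\,\}$, where $\mathcal{D}$ is the Boolean subalgebra of the power set of $\mathbb{Z}_p^2$ generated by $L_1,\ldots,L_{n+2}$. One inclusion, together with closure of $\{\,R_S : S \in \mathcal{D}\,\}$ under $\cup$, $\overline{\cdot}$, and ${}^{\cup}$, is immediate (every member of $\mathcal{D}$ is symmetric, and $\{0\},\mathbb{Z}_p^2 \in \mathcal{D}$ since distinct lines meet only at $0$). The substance is that $\mathcal{D}$ is closed under sumsets. Its atoms are $\{0\}$, the punctured lines $P_j = L_j \setminus \{0\}$ $(1 \le j \le n+2)$, and $U = \mathbb{Z}_p^2 \setminus (L_1 \cup \cdots \cup L_{n+2})$; since sumset distributes over unions it suffices to check sums of atoms. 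Here $\{0\}+X = X$, $P_i + P_i = L_i$, $P_i + P_j = \overline{L_i \cup L_j}$ for $i \ne j$, and $P_i + U = \overline{L_i}$ are straightforward coset counts (for the last, $n < p-2$ guarantees that each $L_i$-coset other than $L_i$ meets $U$ in at least two points).

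The step I expect to be the main obstacle is $U + U = \mathbb{Z}_p^2$. I would prove it by inclusion--exclusion on indicator functions: $\mathbf{1}_U = \mathbf{1}_{\mathbb{Z}_p^2} - \sum_{j=1}^{n+2}\mathbf{1}_{L_j} + (n+1)\mathbf{1}_{\{0\}}$, and convolving over $\mathbb{Z}_p^2$ (using $\mathbf{1}_{L_i}*\mathbf{1}_{L_j} = p\,\mathbf{1}_{L_i}$ when $i=j$ and $\mathbf{1}_{\mathbb{Z}_p^2}$ when $i \ne j$, with the analogous identities for the remaining terms) gives $(\mathbf{1}_U * \mathbf{1}_U)(v) = A + B\cdot|\{\,j : v \in L_j\,\}| + C\cdot\mathbf{1}_{\{0\}}(v)$, with $A = (p-n-2)^2 + n$, $A+B = (p-n-2)(p-n-1)$, $C = (n+1)^2 \ge 0$, and value $|U|$ at the origin. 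Since $n < p-2$ gives $p-n-2 \ge 1$, all of $A$, $A+B$, and $|U|$ are positive, so every $v$ lies in $U+U$. Granting closure under sumsets, $\ra(\vr M) = \{\,R_S : S \in \mathcal{D}\,\}$, and the equivalence relations are read off directly: $\{0\}$, $\mathbb{Z}_p^2$, and $L_1,\ldots,L_{n+2}$ all lie in $\mathcal{D}$, but for an omitted line $L$ the set $L \setminus \{0\}$ is a proper nonempty subset of the atom $U$ (proper because $n < p-2$ forces at least two omitted lines, so $U$ is a disjoint union of at least two punctured lines), so $L = \{0\} \cup (L \setminus \{0\}) \notin \mathcal{D}$. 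Hence $\eqr(\ra(\vr M)) = \{1,\id,\eta_0,\eta_1,\alpha_1,\ldots,\alpha_n\} = \vr M$, the reverse inclusion $\vr M \subseteq \eqr(\ra(\vr M))$ being trivial.
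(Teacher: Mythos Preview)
Your argument is correct and structurally parallel to the paper's: both show that $\ra(\vr M)$ coincides with the Boolean algebra it generates by checking that compositions of atoms land back in that Boolean algebra, and then read off the equivalence elements from the atom list. Your packaging is different and cleaner: by passing to the Cayley relation algebra $\{R_S : S\subseteq\mathbb{Z}_p^2\}$ you convert relation composition into sumsets, so the atom-by-atom check becomes arithmetic in $\mathbb{Z}_p^2$ rather than a case analysis on pairs of points as in the paper's Lemma~\ref{lemma}. The substantive difference concerns the residual atom (your $U$, the paper's $\beta$). The paper asserts uniformly that $a\circ a=\1+a$ and $a\circ b=\overline{\1+a+b}$ for \emph{all} atoms $a,b$ of the Boolean algebra; in fact both formulas fail once $\beta$ is involved (for instance $\beta\circ\beta=1\neq\1+\beta$, and $(\eta_0\cap\overline{\1})\circ\beta=\overline{\eta_0}\neq\overline{\1+(\eta_0\cap\overline{\1})+\beta}$), and the final paragraph of the paper's proof silently assumes $\gamma$ cannot be $\1+\beta$ or $\eta_i+\beta$. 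The conclusion survives because these compositions are still Boolean combinations of atoms, but the stated justifications do not cover the $\beta$ cases. Your coset count for $P_i+U=\overline{L_i}$ and your convolution computation for $U+U=\mathbb{Z}_p^2$ supply exactly the missing verifications, and your final step (an omitted line meets the atom $U$ in a proper nonempty subset, hence is not in $\mathcal{D}$) cleanly rules out extra equivalence elements without relying on the erroneous claim that $\1+\beta$ is an equivalence relation.
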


This lemma follows from \cite{Lyndon};  the result is not explicitly stated in the paper, although it can be extracted from it. A proof is given in Section \ref{mainsection}.

  Consider the relation $\alpha_{p-1}$ (which is not in $\vr M$); $\alpha_{p-1}$  can be defined from $\eta_0$, $\eta_1$, and $\alpha_1$ with  a primitive positive formula by
$$a \alpha_{p-1} b \leftrightarrow \exists c, d \left( a \eta_0 c \wedge c \eta_1 b \wedge a \eta_1 d \wedge d \eta_0 b \wedge c \alpha_1 d
\right).$$
Thus $\eqr(\ppf(\vr M)) \neq \vr M$.  The lattice $\vr m$ is closed under
the operations of relation algebras and first-order definitions
using at most three variables but not under primitive positive
definitions.

This second example has the following interesting consequence.  If $n\geq 1$ and if $p\geq5$ is a prime greater than $n+2$, then the lattice $\vr M$ in the example gives a lattice of equivalence relations closed under the operations of relation algebras which is isomorphic to $\m m_{n+2}$. Note that $\m M_1$ and $\m M_2$ can easily be represented by letting $\vr M$ be $\{1, \id, \eta_0 \}$ and $\{1, \id, \eta_0, \eta_1 \}$, respectively.  Thus we have

\begin{thm}
For any positive integer $n$, there is a lattice $\vr M$ of equivalence relations on a finite set so that  $\vr M \cong \m M_n$ and $\eqr(\ra(\vr M))= \eqr(\fo(\vr M)) = \vr M$.
\end{thm}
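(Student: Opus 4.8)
The plan is to assemble the theorem from Lemma~\ref{lemma1} together with the fact, recorded in the introduction, that for a set of \emph{binary} relations on a finite set, closure under the relation algebra operations coincides with closure under first-order definability using at most three variables. First I would record the consequence of that fact that we actually use: for any set $\vr R$ of binary relations on a finite set $A$ we have $\fo(\vr R) = \ra(\vr R)$. Indeed, $\ra(\vr R)$ is closed under the relation algebra operations, hence (by the cited equivalence) under $\fo$, so $\fo(\ra(\vr R)) = \ra(\vr R)$; on the other hand each relation algebra operation is definable by a first-order formula in at most three variables (composition uses the third variable, the others use two), so $\ra(\vr R) \subseteq \fo(\vr R)$; combining these with $\vr R \subseteq \ra(\vr R)$ gives $\fo(\vr R) = \ra(\vr R)$, and in particular $\eqr(\fo(\vr R)) = \eqr(\ra(\vr R))$. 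Hence it suffices to produce, for every positive integer $n$, a lattice $\vr M$ of equivalence relations on a finite set with $\vr M \cong \m M_n$ and $\eqr(\ra(\vr M)) = \vr M$.

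For $n \geq 3$ I would invoke Lemma~\ref{lemma1} with its parameter set equal to $n-2$. Choose a prime $p$ with $p \geq 5$ and $p > n$ (possible since there are infinitely many primes); then $1 \leq n-2 < p-2$, so the lemma applies to $\vr M = \{1, \id, \eta_0, \eta_1, \alpha_1, \dots, \alpha_{n-2}\} \subseteq \con(\mathbb{Z}_p^2) \cong \m M_{p+1}$ and gives $\eqr(\ra(\vr M)) = \vr M$. The $n$ relations $\eta_0, \eta_1, \alpha_1, \dots, \alpha_{n-2}$ are pairwise distinct atoms of $\con(\mathbb{Z}_p^2)$, so in that lattice any two of them meet to $\id$ and join to $1$; since $\vr M$ also contains $\id$ and $1$, it is a sublattice of $\con(\mathbb{Z}_p^2)$ isomorphic to $\m M_n$. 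For the two remaining cases I would use the explicit examples noted before the theorem: $\vr M = \{1, \id, \eta_0\}$ (a three-element chain, so $\vr M \cong \m M_1$) and $\vr M = \{1, \id, \eta_0, \eta_1\}$ (so $\vr M \cong \m M_2$), realized, say, on $\mathbb{Z}_p^2$ with $p \geq 5$. In each of these two cases the equality $\eqr(\ra(\vr M)) = \vr M$ is an elementary finite computation of the relation algebra generated, of the same ``coset of a subgroup'' flavour as the computations carried out in Section~\ref{mainsection}. Putting the cases together and applying the first paragraph gives $\eqr(\ra(\vr M)) = \eqr(\fo(\vr M)) = \vr M$ in all cases, which is the theorem.

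The main substantive ingredient, Lemma~\ref{lemma1}, is already stated and its proof is deferred to Section~\ref{mainsection}; granting it, the theorem itself presents essentially no obstacle and is a matter of careful bookkeeping. The only points needing attention are the index shift — $k$ of the relations $\alpha_i$ together with $\eta_0$ and $\eta_1$ give a copy of $\m M_{k+2}$, so the uniform construction only covers $n \geq 3$ and the cases $\m M_1, \m M_2$ must be dealt with separately — and the choice of prime, which must satisfy $p \geq 5$ and $p > n$ simultaneously so that the hypotheses $p \geq 5$ and $1 \leq n-2 < p-2$ of Lemma~\ref{lemma1} both hold.
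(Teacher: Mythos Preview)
Your proposal is correct and follows essentially the same route as the paper: invoke Lemma~\ref{lemma1} (with parameter $n-2$ and a suitable prime $p\ge 5$, $p>n$) to handle $\m M_n$ for $n\ge 3$, treat $\m M_1$ and $\m M_2$ by the ad hoc choices $\{1,\id,\eta_0\}$ and $\{1,\id,\eta_0,\eta_1\}$, and use the equivalence between closure under relation algebra operations and closure under three-variable first-order definability to get the $\fo$ conclusion. If anything, you are more explicit than the paper, which records the theorem as an immediate consequence of the preceding discussion; the only minor quibble is that $\fo(\vr R)$ as defined may contain relations of arity other than two, so the literal equality $\fo(\vr R)=\ra(\vr R)$ should be read as an equality on binary relations, which is all that is needed for $\eqr(\fo(\vr R))=\eqr(\ra(\vr R))$.
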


\section{Proof of Lemma \ref{lemma1}} \label{mainsection}

In this section, we prove that  $\eqr(\ra(\vr m)) = \vr m$.  Again, the results in this section are implicit in \cite{Lyndon}, but the relationship is not immediately apparent; hence we provide ``bottom-up" proofs here.

\begin{lm}\label{lemma}
  For distinct atoms $\alpha$ and $\beta$ of $\vr m$, $\alpha\circ\beta=1$.
\end{lm}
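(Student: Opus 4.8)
The plan is to exploit the linear structure of $\mathbb{Z}_p^2$ regarded as a two-dimensional vector space over $\mathbb{F}_p$. Each atom of $\vr m$ is the kernel of a surjective linear functional on $\mathbb{Z}_p^2$: the functional attached to $\eta_0$ is $(x_0,x_1)\mapsto x_0$, the one attached to $\eta_1$ is $(x_0,x_1)\mapsto x_1$, and the one attached to $\alpha_k$ is $(x_0,x_1)\mapsto kx_0-x_1$. Equivalently, the classes of each atom are the cosets of a one-dimensional subspace of $\mathbb{Z}_p^2$, and distinct atoms determine distinct such subspaces.

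Now fix distinct atoms $\alpha$ and $\beta$ with associated functionals $f$ and $g$. For $a,b\in\mathbb{Z}_p^2$ we have $(a,b)\in\alpha\circ\beta$ exactly when there is a $c$ with $f(c)=f(a)$ and $g(c)=g(b)$, i.e.\ when the affine lines $a+\ker f$ and $b+\ker g$ meet. So it suffices to show that the linear map $c\mapsto(f(c),g(c))$ from $\mathbb{Z}_p^2$ to $\mathbb{Z}_p^2$ is surjective, and by counting this is the same as $f$ and $g$ being linearly independent over $\mathbb{F}_p$. I would verify this by a single $2\times2$ coefficient determinant in each of the (essentially three) types of pair: the determinant is $1$ for $\{\eta_0,\eta_1\}$, $-1$ for $\{\eta_0,\alpha_k\}$, $-k$ for $\{\eta_1,\alpha_k\}$, and $k-j$ for $\{\alpha_j,\alpha_k\}$ with $j\neq k$. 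Since $1\le j,k\le n<p$ and $j\neq k$, none of these is $0$ in $\mathbb{F}_p$, so $f$ and $g$ are independent. Hence for every $a,b$ the system $f(c)=f(a)$, $g(c)=g(b)$ has a solution, and therefore $\alpha\circ\beta=1$.

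There is no genuine obstacle here: the statement just records that two affine lines with different directions in a plane always intersect, and the hypothesis on $n$ is used only to keep the field elements $k$ and $k-j$ nonzero modulo $p$ (in fact $n<p$ would be enough for this lemma alone; the sharper bound $n<p-2$ is needed elsewhere). The only point requiring a little care is handling the $\eta_i$'s and the $\alpha_k$'s uniformly, which the description of each atom as the kernel of a linear functional does in one step.
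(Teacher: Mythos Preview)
Your proof is correct and takes a genuinely different route from the paper's. The paper argues by explicit case analysis: it treats $\eta_i\circ\alpha_k$ and $\alpha_i\circ\alpha_j$ separately (the case $\eta_0\circ\eta_1$ being tacitly regarded as trivial), and in each case writes down by hand an intermediate point $\langle y_0,y_1\rangle$ witnessing that a given pair lies in the composition. You instead recognize that every atom is the congruence modulo the kernel of a nonzero linear functional on $\mathbb{F}_p^2$, reduce the problem to surjectivity of $c\mapsto(f(c),g(c))$, and dispose of all cases at once by a $2\times2$ determinant computation showing the two functionals are independent. This buys you a uniform argument with no case distinctions and makes transparent the underlying affine-geometric fact (two lines with different directions in a plane over a field always meet); the paper's approach, on the other hand, is more self-contained in that it exhibits the witness explicitly without invoking any linear-algebra vocabulary. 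Your remark that only $n<p$ is needed here, with the sharper bound $n<p-2$ reserved for later use, is also correct.
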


\begin{proof}
  There are two nontrivial cases.

  Case 1: $\eta_i\circ\alpha_k$.\\

  Suppose for simplicity that $i=0$.  Then let $\langle u_0,u_1\rangle$ and $\langle v_0,v_1\rangle$ be any pairs in $\mathbb{Z}_p^2$.  We will show $\langle u_0,u_1\rangle\eta_0\circ\alpha_k\langle v_0,v_1\rangle$.  Let $\langle y_0,y_1\rangle=\langle u_0,ku_0+v_1-kv_0\rangle$.  Then $\langle u_0,u_1\rangle\eta_0\langle y_0,y_1\rangle$.  We need to show $\langle y_0,y_1\rangle\alpha_k\langle v_0,v_1\rangle$.  We have
  \begin{align*}
     ky_0-y_1 &=ky_0-(ku_0+v_1-kv_0)\\
     &=ku_0-ku_0-v_1+kv_0\\
     &=kv_0-v_1.\\
  \end{align*}

  Hence $\langle y_0,y_1\rangle\alpha_k\langle v_0,v_1\rangle$.\\

  Case 2: $\alpha_i\circ\alpha_j, i\neq j$.\\

  Again, let $\langle u_0,u_1\rangle,\langle v_0,v_1\rangle \in\mathbb{Z}_p^2$.  We need $\langle y_0,y_1\rangle$ such that $$iu_0-u_1=iy_0-y_1 \text{ and }jy_0-y_1=jv_0-v_1.$$  Since $\mathbb{Z}_p$ is a field, we can find $y_0\in\mathbb{Z}_p$ such that $$(j-i)y_0=u_1-iu_0+jv_0-v_1,$$ so $$iy_0=jy_0-u_1+iu_0-jv_0+v_1,$$

  \noindent and let $$y_1=j(y_0-v_0)+v.$$  Then
  \begin{align*}
    iy_0-y_1 &=jy_0-u_1+iu_0-jv_0+v_1-y_1\\
    & =jy_0-u_1+iu_0-jv_0+v_1-[jy_0-jv_0+v_1]\\
    & =iu_0-u_1.\\
  \end{align*}
  Hence $\langle u_0,u_1\rangle\alpha_i\langle y_0-y_1\rangle$.  Also
  \begin{align*}
    jy_0-y_1 &=(j-i)y_0+iy_0-y_1\\
    &= (j-i)y_0+iy_0-(jy_0-jv_0+v_1)\\
    &=(j-i)y_0+(i-j)y_0+jv_0-v_1\\
    &=jv_0-v_1.\\
  \end{align*}

  Hence $\langle y_0,y_1\rangle\alpha_j\langle v_0,v_1\rangle$.

\end{proof}

Thus we see that $\con(\mathbb{Z}^2_p)\cong \m M_{p+1}$.  Now we
define $\mathcal{M}$ to be the sublattice of Con$(\mathbb{Z}_p^2)$ consisting of the identity
and universal relations, along with the atoms $\eta_0,\eta_1$, and
$\alpha_1$ through $\alpha_{n}$.  Then $\mathcal{M}\cong\m M_{n+2}$.

Now we are ready to prove Lemma \ref{lemma1}.

\begin{proof}[Proof  of Lemma \ref{lemma1}]
  First we establish the following claim: BA$(\mathcal{M})=$RA$(\mathcal{M})$,  where BA$(\mathcal{M})$ is the Boolean algebra generated by $\mathcal{M}$.  The set $At(\text{BA}(\mathcal{M}))$ of atoms of BA$(\mathcal{M})$ consists of the identity relation $\1$ along with $\eta_0\cap \overline{\1},\eta_1 \cap \overline{\1}$, and $\alpha_1\cap \overline{\1}$ through $\alpha_{n}\cap \overline{\1}$, and the one additional atom $$\beta=\overline{\1+\eta_0+\eta_1+\ds\sum^{n}_{i=1}\alpha_i}.$$

  To see this, consider the $\eta _i$'s and $\alpha_j$'s.  Any distinct pair of these intersect to $\1$, so the $\eta_i\cap\overline{\1}$'s and $\alpha_j\cap\overline{\1}$'s are minimal nonzero elements, hence are atoms.  The Boolean algebra generated by the $\eta_i$'s and the $\alpha_j$'s will be the same as that generated by $\1$, the $\eta_i\cap\overline{\1}$'s and the $\alpha_j\cap\overline{\1}$'s.

  By Proposition 4.4 of \cite{SK}, the Boolean algebra generated by these atoms is equal to all joins of meets of atoms and their complements.  Since the meet of an atom $a$ with anything is either $a$ or 0, we need only consider joins of meets of complements of atoms.  Since we are looking for the atoms of the Boolean algebra, we need only consider meets of complements of atoms.  All such meets will be above the meet of all such complements, which is
  $$\beta=\overline{\1}\cdot\overline{n_0}\cdot\overline{n_1}\cdot \prod_{i=1}^{n}\overline{\alpha_i}=\overline{\1+\eta_0+\eta_1+\ds\sum^{n}_{i=1}\alpha_i}$$
  Thus $\beta$ is the only atom not previously listed.

  We need to show that any composition of atoms is already in BA$(\mathcal{M})$.  If $a\in At$(BA$(\mathcal{M})$), then $a\circ a=\1+a$, since $a$ is an equivalence relation,  ``minus the identity", that has no singleton equivalence classes.  If $a\neq b\in At(\text{BA}(\mathcal{M}))$, then $$a\circ
  b=\overline{\1+a+b}.$$

   To establish this, we first prove that $$(\alpha_i\cap \overline{\1})\circ(\alpha_j\cap\overline{\1})=\overline{\1+\alpha_i+\alpha_j}.  $$

To establish the inclusion $(\alpha_i\cap\overline{\1})\circ(\alpha_j\cap\overline{\1})\supseteq\overline{\1+\alpha_i+\alpha_j}$, consider the proof of Case 2 of Lemma \ref{lemma}.  It is not hard to check that if $\langle u_0,u_1\rangle$ and $\langle v_0,v_1\rangle$ are not related by $\1$, $\alpha_i$, or $\alpha_j$, then the pair $\langle y_0,y_1\rangle$ given in the proof is distinct from both $\langle u_0,u_1\rangle$ and $\langle v_0,v_1\rangle$.  Hence if
$$\langle u_0,u_1\rangle\overline{\1 +\alpha_i+\alpha_j}\langle v_0,v_1\rangle,$$
then $$\langle u_0,u_1\rangle(\alpha_i\cap\overline{\1}\circ(\alpha_j\cap\overline{\1})\langle v_0,v_1\rangle.$$

     It remains to show that $(\alpha_i\cap\overline{\1})\circ(\alpha_j\cap\overline{\1})$ contains nothing but $\overline{\1+\alpha_i+\alpha_j}$.  Since $\alpha_i\cap\overline{\1}$ and $\alpha_j\cap\overline{\1}$ are  disjoint symmetric diversity relations, their composition is disjoint from the identity.  To prove that this composition is disjoint from $\alpha_i$ (and by symmetry from $\alpha_j$ as well), suppose for contradiction that there exist pairwise distinct pairs $\langle u_0,u_1\rangle,\langle y_0,y_1\rangle,\langle v_0,v_1\rangle$ with $\langle u_0,u_1\rangle\alpha_i\langle y_0,y_1\rangle\alpha_j\langle v_0,v_1\rangle$ and $\langle u_0,u_1\rangle\alpha_i\langle v_0,v_1\rangle$.  Then $iu_0-u_1=iy_0-y_1=iv_0-v_1$ and $jy_0-y_1=jv_0-v_1$.  Then $(j-i)y_0=(j-i)v_0$, and so $y_0=v_0$.  Then $y_1=v_1$ as well, a contradiction.  By a similar argument,
  $(\eta_i\cap\overline{\1})\circ(\alpha_j\cap\overline{\1})=\overline{\1+\eta_i+\alpha_j}$.


 Therefore, the composition of atoms is a boolean combination of those atoms.  Since composition distributes over union, it follows that BA$(\mathcal{M})=\text{RA}(\mathcal{M})$.\\

  Now we are ready to show that Eq(RA$(\mathcal{M})$)$=\mathcal{M}$.  First, we note that $\eta_0,\eta_1,$ and $\alpha_1$ through $\alpha_{n}$ are the minimal non-identity elements in Eq$(\text{RA}(\mathcal{M}))$.  So suppose that there is some other relation $\gamma\in \text{Eq}(\text{RA}(\mathcal{M}))$.  Since  $\gamma\in$RA$(\mathcal{M})=$BA$(\mathcal{M})$ and since $\gamma$ is not an atom, $\gamma$ must contain  at least two atoms. Since $\gamma$ also contains $\1$, $\gamma$ contains two of the non-trivial relations $\{\eta_0,\eta_1,\alpha_1,\ldots,\alpha_{n}\}$, hence $\gamma\circ \gamma=1$.  But $\gamma\circ \gamma=\gamma$, hence $\gamma=1$.  So the only non-trivial relations in Eq(RA$(\mathcal{M})$ are $\{\eta_0,\eta_1,\alpha_1,\ldots,\alpha_{n}\}$ along with the unit $1$. Therefore Eq$(\text{RA}(\mathcal{M}))=\mathcal{M}$.
\end{proof}


\end{document}